\newtheorem{theorem}{Theorem}
\newtheorem{remark}{Remark}
\newtheorem{proposition}{Proposition}
\newtheorem{lemma}{Lemma}
\newtheorem{example}{Example}
\renewcommand{\int}{{\rm int\,}}
\newcommand{\transp}{{^{\rm T}}}
\newcommand{\ri}{{\rm ri\,}}
\newcommand{\Lin}{{\rm Lin\,}}
\newcommand{\co}{\mathop{\rm co\,}}
\newcommand{\cone}{{\rm cone\,}}
\newcommand{\B}{{\rm I\!B}}
\newcommand{\R}{{\rm I\!R}}
\renewcommand{\L}{\mathcal{L}}
\newcommand{\matr}[1]{\begin{bmatrix} #1 \end{bmatrix}}    
\def\transp{^{\rm T}}
\author{Javier Pe\~na\thanks{Tepper School of Business,
Carnegie Mellon University, USA, {\tt jfp@andrew.cmu.edu}} \and Vera Roshchina\thanks{CIMA, Universidade de \'Evora, Portugal, {\tt vera.roshchina@gmail.com}}}
\title{A Complementarity Partition Theorem for Multifold Conic Systems}
\begin{document}
\maketitle

\begin{abstract} Consider a homogeneous multifold convex conic system
\[
Ax = 0, \; x\in K_1\times \cdots \times K_r
\]
and its alternative system
\[
A\transp y \in K_1^*\times \cdots \times K_r^*,
\]
where $K_1,\dots, K_r$ are regular closed convex cones.
We show that there is canonical partition of the index set $\{1,\dots,r\}$ determined by certain complementarity sets associated to the most interior solutions to the two systems.  Our results are inspired by and extend the Goldman-Tucker Theorem for linear programming.

\end{abstract}

\noindent {\bf Key words } strict complementarity, Goldman-Tucker Theorem, conic feasibility system, multifold conic system

\section{Introduction}

Assume $K\subseteq \R^n$ is a closed convex cone and $A\in \R^{m\times n}$.  Consider the homogeneous conic system
\begin{equation}\label{eq:P}
Ax = 0, \quad x\in K,\tag{P}
\end{equation}
and its alternative system
\begin{equation}\label{eq:D}
 A\transp y \in K^*,\tag{D}
\end{equation}
where  $K^*\in \R^n$ is the dual of $K^*$.  It is immediate that any solutions $\bar x$ and $\bar y$ to \eqref{eq:P} and \eqref{eq:D} respectively are {\em complementary,} that is, they satisfy $\bar y\transp (A\bar x) = 0$.  In particular, if either \eqref{eq:P} or \eqref{eq:D} has a strict feasible solution, then the other one only has trivial solutions.  In the special case when $K=\R^n_+$, a stronger related property holds.  As a consequence of Goldman-Tucker Theorem~\cite{GoldmanTucker}, there always exist solutions $\bar x$ and $\bar y$ to \eqref{eq:P} and \eqref{eq:D} respectively such that $\bar x + A\transp \bar y \in \R^n_{++}$.  Such pairs of  {\em strictly complementary} solutions are associated to a canonical partition $B\cup N = \{1,\dots,n\}$ of the index set $\{1,\dots,n\}$ (see Proposition~\ref{thm:LP_compl} below).  The partition sets $B$ and $N$ correspond to the most interior solutions to \eqref{eq:P} and \eqref{eq:D} respectively. Furthermore, there is a nice geometric interpretation of the sets $B,N$ (see Proposition~\ref{prop:three.ways} below).

\medskip

We present a generalization of the above strict complementary results to more general conic systems.  To that end, we consider the case when the cone $K$ is the direct product of $r$
lower-dimensional regular closed convex cones. That is, we assume
\begin{equation}\label{eq:multifold.cone}
K = K_1\times\cdots
\times K_r,
\end{equation}
where $K_i\subseteq \R^{n_i}$ is a regular closed convex cone for $i=1,\dots,r.$ Throughout the sequel we shall use $I$ to denote the set $I= \{1,\dots, r\}$ and
$n$ to denote the dimension $n = \sum_{i=1}^r n_i$.

Following the terminology introduced in~\cite{CCP_Multifold2008} we call the conic systems \eqref{eq:P} and \eqref{eq:D} {\em
multifold} when the cone $K$ is as in \eqref{eq:multifold.cone}.
This type of multifold structure is common in optimization. Formulations for linear programming (LP), second-order conic programming (SOCP) and semidefinite programming (SDP) problems generally lead to feasibility problems of this form.  Our first main result  (Theorem~\ref{thm:main_geometry}) shows that there are some canonical subsets $B,N$ and $B_0,N_0$ of $I$ associated to certain geometric properties of the problems \eqref{eq:P} and \eqref{eq:D}.  These sets generalize the partition sets $B,N$ in the case $K=\R^n_+$.  Our second main result (Theorem~\ref{thm:main_partition}) shows that there exists a unique canonical partition of the index set $I$ associated to the most interior solutions to \eqref{eq:P} and \eqref{eq:D}.

The paper is organized as follows.  Section~\ref{sec:preamble} provides the foundation for our work, namely the existence of strictly complementary solutions to \eqref{eq:P}, \eqref{eq:D} when $K=\R^n_+$.  Section~\ref{sec:main_section} presents our main results, namely Theorem~\ref{thm:main_geometry} and
Theorem~\ref{thm:main_partition}.  Section~\ref{sec:second_order} discusses in more detail the special case of second-order conic systems.  Section~\ref{sec:final} concludes the paper with some final remarks.

\section{Strict Partition for Polyhedral Homogeneous Systems}
\label{sec:preamble}
To motivate and state our main results, we first consider the special case when $K = \R^n_+$ in \eqref{eq:P}, \eqref{eq:D}.  In this case the conic systems become
\begin{equation}\label{eq:P.LP}
Ax = 0, \quad x\ge 0;
\end{equation}
and
\begin{equation}\label{eq:D.LP}
 A\transp y \ge 0,
\end{equation}
where $A\in \R^{m\times n}$. This can be considered as a special case of a multifold conic system with $r=n$ and $K_i = \R_+$ in \eqref{eq:multifold.cone}.  Hence throughout this section we have $I = \{1,\dots,n\}$.  Furthermore, for notational convenience, we shall write $A = \matr{a_1 & \cdots & a_n} \in \R^{m\times n}$.  In other words, $a_i \in\R^n$ is the $i$-th column of $A$. The following result is a consequence of the Goldman-Tucker Theorem for linear programming~\cite{GoldmanTucker}.

\begin{proposition}\label{thm:LP_compl} Consider the pair of feasibility problems \eqref{eq:P.LP} and \eqref{eq:D.LP} for a given $A\in\R^{m\times n}$. For a unique partition $B\cup N = I$ of the index set $I$ there exist solutions $\bar x$ to \eqref{eq:P.LP} and $\bar y$ to \eqref{eq:D.LP} satisfying
\[
\bar x_B > 0, \; \; A_N\transp \bar y > 0,
\]
where we have used the standard notation: $\bar x_B>0$ means $\bar x_i>0$ for all $i\in B$, and $A_N\transp \bar y>0$ means $a_i\transp \bar y>0$ for all $i\in N$.
\end{proposition}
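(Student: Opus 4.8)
The plan is to define the two partition sets directly from solvability data and then to verify, in order, that they are disjoint, that they cover $I$, that the defining inequalities can be attained simultaneously, and that the partition is unique. Concretely, set
\[
B := \{i\in I : \text{there is a solution } x \text{ of \eqref{eq:P.LP} with } x_i>0\},
\qquad
N := \{i\in I : \text{there is a solution } y \text{ of \eqref{eq:D.LP} with } a_i\transp y>0\}.
\]
Disjointness of $B$ and $N$ is immediate from complementarity: if $x$ solves \eqref{eq:P.LP} and $y$ solves \eqref{eq:D.LP}, then $0 = y\transp Ax = \sum_{i=1}^n (a_i\transp y)\,x_i$ is a sum of nonnegative terms, so $(a_i\transp y)\,x_i = 0$ for every $i$; hence no index can simultaneously admit a primal solution with positive $i$-th coordinate and a dual solution with $a_i\transp y>0$.

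The step that really encodes the Goldman--Tucker Theorem is showing $B\cup N = I$. Fix $i\notin B$. Since $x\ge 0$ for every feasible $x$ of \eqref{eq:P.LP}, $i\notin B$ forces $x_i=0$ on the whole feasible set, so the linear program $\max\{e_i\transp x : Ax=0,\ x\ge 0\}$ is feasible and bounded with optimal value $0$. By LP strong duality its dual $\min\{0 : A\transp y\ge e_i\}$ is feasible, which yields $y$ with $A\transp y\ge e_i$; in particular $a_i\transp y\ge 1>0$ and $a_j\transp y\ge 0$ for $j\ne i$, so $y$ solves \eqref{eq:D.LP} and $i\in N$. (Equivalently one may invoke a Farkas-type theorem of the alternative, or apply the Goldman--Tucker Theorem to an LP pair built from $A$.) Together with disjointness this makes $B\cup N=I$ a partition.

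It remains to aggregate and to prove uniqueness. Because the solution set of \eqref{eq:P.LP} is a convex cone closed under addition, summing over $i\in B$ a family of primal solutions, one with a positive $i$-th coordinate for each $i$, produces a single $\bar x$ with $\bar x_B>0$; the analogous summation for \eqref{eq:D.LP} produces $\bar y$ with $A_N\transp \bar y>0$. For uniqueness, suppose $B'\cup N'=I$ is any partition admitting solutions $\bar x'$, $\bar y'$ with $\bar x'_{B'}>0$ and $A_{N'}\transp \bar y'>0$; then $B'\subseteq B$ and $N'\subseteq N$ directly from the definitions of $B$ and $N$, and since $B',N'$ partition $I$ while $B,N$ are disjoint, $B'=B$ and $N'=N$.

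The only genuine obstacle is the covering step $B\cup N=I$: every other part is elementary convex-cone bookkeeping, whereas this is precisely where LP strong duality — the Goldman--Tucker phenomenon — enters.
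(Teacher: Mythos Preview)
Your proof is correct, and it is essentially the argument the paper has in mind: the paper does not give a self-contained proof of this proposition but simply attributes it to the Goldman--Tucker Theorem, and your covering step via LP strong duality is exactly that citation unpacked. Moreover, your definitions of $B$ and $N$ coincide with the paper's display~\eqref{eq:first}, and your aggregation and uniqueness arguments are the $K=\R^n_+$ specialization of the paper's proof of Theorem~\ref{thm:main_partition}.
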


The partition sets $B,N$ in Proposition~\ref{thm:LP_compl} can be described is several ways.  The three descriptions of $B,N$ displayed  in Proposition~\ref{prop:three.ways} below lay the foundation for our main work.
In the sequel we use the following convenient notation.  For a convex cone $C\subseteq  \R^d,$ let $\Lin(C) \subseteq \R^d$  denote the {\em lineality space} of $C$, that is, the largest linear subspace contained in $C$.  Observe that because $C$ is a convex cone,  $\Lin(C) = \{x\,|\,x,-x\in C\}.$

\begin{proposition}\label{prop:three.ways} The sets $B,N$ in Proposition~\ref{thm:LP_compl} can be described as
\begin{equation}\label{eq:first}
\begin{array}{c}
B = \{i\in I\,|\, \exists\, x: \, Ax = 0,\, x\geq 0, \, x_i>0\},
\\ [2ex] 
N = \{i\in I\,|\, \exists\, y: \, A\transp y \geq 0,\, a_i\transp y >0\}.
\end{array}
\end{equation}
These sets can also be described as
\begin{equation}\label{eq:second}
\begin{array}{c}B = \{i\in I\,|\,  A\transp y \geq 0\Rightarrow a_i\transp y=0\},\\[2ex]
N = \{i\in I\,|\,  \, Ax = 0,\, x\geq 0 \Rightarrow x_i=0\}.
\end{array}
\end{equation}
And they can also be described as
\begin{equation}\label{eq:third}
\begin{array}{c}
B = \{i\in I\,|\,  a_i\in \Lin(A\R^n_+)\},\\[2ex]
N = \{i\in I\,|\,  a_i\not \in \Lin(A\R^n_+)\}.
\end{array}
\end{equation}
\end{proposition}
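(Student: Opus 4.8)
The plan is to name the six candidate sets and reduce everything to two facts. Write $B_1,N_1$ for the sets in \eqref{eq:first}, $B_2,N_2$ for those in \eqref{eq:second}, and $B_3,N_3$ for those in \eqref{eq:third}. A purely logical observation comes first: a feasible $y$ for \eqref{eq:D.LP} satisfies $a_i\transp y\ge 0$ for every $i$, so ``$a_i\transp y\ne 0$'' is equivalent to ``$a_i\transp y>0$'', and hence $i\notin B_2$ is equivalent to $i\in N_1$; symmetrically, using $x_i\ge 0$, $i\notin N_2$ is equivalent to $i\in B_1$. Thus $B_2=I\setminus N_1$ and $N_2=I\setminus B_1$, and it suffices to prove that $\{B_1,N_1\}$ is a partition of $I$ equal to $\{B,N\}$, and that $B_3=B$ (whence $N_3=I\setminus B_3=N$).

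For the first of these, the solutions $\bar x,\bar y$ provided by Proposition~\ref{thm:LP_compl} are themselves the required witnesses, so $B\subseteq B_1$ and $N\subseteq N_1$. Disjointness of $B_1$ and $N_1$ follows from complementarity: if $i$ belonged to both, pick $x\ge 0$ with $Ax=0$, $x_i>0$ and $y$ with $A\transp y\ge 0$, $a_i\transp y>0$; then $0=y\transp(Ax)=\sum_j(a_j\transp y)x_j\ge (a_i\transp y)x_i>0$, a contradiction. Since $B\cup N=I$, $B\cap N=\emptyset$, $B\subseteq B_1$, $N\subseteq N_1$ and $B_1\cap N_1=\emptyset$, the two inclusions are forced to be equalities and $\{B_1,N_1\}=\{B,N\}$; combined with the first paragraph this also gives $B_2=B$ and $N_2=N$.

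It remains to match the geometric description \eqref{eq:third}, and the cleanest route is to show $B_3=B_1$. Since $A\R^n_+=\cone\{a_1,\dots,a_n\}$ and $a_i=Ae_i\in A\R^n_+$ always, we have $a_i\in\Lin(A\R^n_+)$ if and only if $-a_i\in A\R^n_+$, i.e. $-a_i=\sum_j\mu_j a_j$ for some $\mu\ge 0$. If $i\in B_1$ with witness $x$, then $Ax=0$ yields $-a_i=\sum_{j\ne i}(x_j/x_i)a_j$, so $-a_i\in A\R^n_+$ and $i\in B_3$. Conversely, if $-a_i=\sum_j\mu_j a_j$ with $\mu\ge 0$, then $x:=\mu+e_i\ge 0$ satisfies $Ax=0$ and $x_i\ge 1>0$, so $i\in B_1$. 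Hence $B_3=B_1=B$ and $N_3=N$.

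The whole argument is elementary once Proposition~\ref{thm:LP_compl} is available, so I do not anticipate a genuine obstacle; the only point needing a moment's attention is the logical reduction in the first paragraph, which relies on the sign constraints $a_i\transp y\ge 0$ and $x_i\ge 0$ being built into feasibility for \eqref{eq:D.LP} and \eqref{eq:P.LP} respectively.
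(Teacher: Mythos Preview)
Your proof is correct. The logical reduction $B_2=I\setminus N_1$, $N_2=I\setminus B_1$ is sound, the complementarity argument showing $B_1\cap N_1=\emptyset$ is standard, the partition logic that forces $B_1=B$ and $N_1=N$ is valid, and the direct verification $B_3=B_1$ via $-a_i\in A\R^n_+$ is clean.

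The route, however, differs from the paper's. The paper does \emph{not} give a self-contained proof of Proposition~\ref{prop:three.ways}; instead it obtains it as the specialization $K=\R^n_+$ of Theorem~\ref{thm:main_geometry}, whose proof uses the separation Lemma~\ref{lem:sep_cones}. In the polyhedral case $AK=\overline{AK}$, so the two geometric descriptions \eqref{eq:third.conic} and \eqref{eq:third.conic.0} collapse, yielding $B=B_0$ and $N=N_0$; unwinding these with $K_i=\R_+$ gives all three descriptions of Proposition~\ref{prop:three.ways} at once. What your approach buys is an elementary argument that uses nothing beyond Proposition~\ref{thm:LP_compl} itself and a one-line complementarity computation, entirely bypassing the separation lemma and the general multifold machinery. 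What the paper's approach buys is economy: the general theorem has to be proved anyway for the main results, so Proposition~\ref{prop:three.ways} comes for free as a corollary rather than requiring a separate argument. Your proof also makes the dependence on Goldman--Tucker explicit in identifying the common value of $B_1=B_2=B_3$ with the partition set $B$ of Proposition~\ref{thm:LP_compl}, a step the paper leaves implicit.
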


The description \eqref{eq:third} of the sets $B,N$ has an interest{color{red}{ing}} geometric interpretation.  What determines if a particular index $i$ belongs to $B$ or $N$ is whether the corresponding $i$-th column $a_i$ lies in the lineality space of the cone $A\R^n_+$.  This geometric interpretation has an interesting extension to multifold conic systems as Theorem~\ref{thm:main_geometry} below shows.  Proposition~\ref{prop:three.ways} is a consequence of Farkas Lemma and is also a special case of Theorem~\ref{thm:main_geometry} below.

\section{A Canonical Partition for Multifold Conic Systems}
\label{sec:main_section}
Consider now the general conic systems \eqref{eq:P}, \eqref{eq:D} where $A\in \R^{m\times n}$ and the cone $K\subseteq \R^n$ is as in \eqref{eq:multifold.cone}.
For notational convenience, write $A = \matr{A_1 & \cdots & A_r}$, where $A_i \in \R^{m\times n_i}$ is the $i$-th block of the matrix $A$.

Our main results generalize Proposition~\ref{thm:LP_compl}  and Proposition~\ref{prop:three.ways} to multifold conic systems.
Motivated by \eqref{eq:first}, define
\begin{equation}\label{eq:first.conic}
\begin{array}{cc}
B = \{i\in I\,|\, \exists\, x: \, Ax = 0, \, x\in K,\, x_i\in \int K_i\},
\\[2ex]
N = \{i\in I\,|\, \exists\, y: \, A\transp y \in K^*,\, A_i\transp y \in \int K^*_i\}.
\end{array}
\end{equation}
Likewise, motivated by \eqref{eq:second}, define
\begin{equation}\label{eq:second.conic}
\begin{array}{cc}
B_0 = \{i\in I\,|\,  A\transp y \in K^* \Rightarrow A_i\transp y = 0\},
\\[2ex]
N_0 = \{i\in I\,|\,  Ax=0,\,  x\in K \Rightarrow x_i = 0\}.
\end{array}
\end{equation}

We are now ready to state our main results.  The following theorem
establishes a characterization of the index sets $B,B_0,N,N_0$ in terms of the geometry of the sets $AK$ and $A_iK_i$.  In the statement below, $\overline{AK}$ denotes the closure of $AK$.

\begin{theorem}\label{thm:main_geometry}
\begin{description}
\item[(i)]
The sets $B$, $N$ defined in \eqref{eq:first.conic} can also be described as
\begin{equation}\label{eq:third.conic}
\begin{array}{c}
B = \{i\in I\,|\,  \ri A_i K_i\cap \Lin(AK) \neq \emptyset \},\\[2ex]
N = \{i\in I\,|\,  A_i (K_i\setminus\{ 0\}) \cap \Lin(\overline{AK}) = \emptyset\}.
\end{array}
\end{equation}
\item[(ii)]
The sets $B_0$, $N_0$ defined in \eqref{eq:second.conic} can also be described as
\begin{equation}\label{eq:third.conic.0}
\begin{array}{c}
B_0 = \{i\in I\,|\,  \ri A_i K_i\cap \Lin(\overline{AK}) \neq \emptyset \},\\[2ex]
N_0 = \{i\in I\,|\,  A_i (K_i\setminus\{ 0\}) \cap \Lin(AK) = \emptyset\}.
\end{array}
\end{equation}
\end{description}
\end{theorem}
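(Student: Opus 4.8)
The plan is to prove the four set equalities in Theorem~\ref{thm:main_geometry} by a sequence of double inclusions, exploiting duality between the primal description (existence of $x$) and the dual description (existence of $y$), and the elementary fact that for any cone $C$ one has $\Lin(\overline{C})=\{z : z\in\overline{C},\ -z\in\overline{C}\}$ together with the relation between the relative interior of $C$ and membership of a point in the lineality space of $\overline{C}$. The central technical device is the observation that $z\in\ri(AK)$ iff $z\in\Lin(\overline{AK})$; more precisely, since $AK$ is a convex cone, $\ri(AK)\subseteq\Lin(\overline{AK})$, and conversely a point of $\Lin(\overline{AK})$ that lies in $AK$ (or in $\ri(AK)$ for the appropriate blockwise statement) witnesses the lineality. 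I would first record these convex-analytic facts as a short preliminary paragraph, then handle $B$, then $N$, then $B_0$, $N_0$, noting that $B_0$ and $N_0$ are obtained from $B$ and $N$ by replacing $AK$ with $\overline{AK}$ in exactly one place, so once $B$ and $N$ are done the arguments for $B_0$, $N_0$ are essentially verbatim repetitions with that single substitution.

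For the characterization of $B$: the inclusion using \eqref{eq:first.conic}$\Rightarrow$\eqref{eq:third.conic} is the easy direction. If $Ax=0$, $x\in K$, $x_i\in\int K_i$, then $A_ix_i=-\sum_{j\ne i}A_jx_j\in AK$ and also $A_ix_i=-A_i x_i\cdot(-1)$... more carefully, $A_i x_i\in AK$ and $-A_i x_i = \sum_{j\ne i}A_j x_j + 0\in AK$ is false in general; instead I use that $A_i x_i = -\sum_{j\neq i}A_j x_j$ lies in $AK$, and that by scaling $x_i$ slightly we can perturb within $\int K_i$ while staying a solution, which shows $A_i x_i$ can be written with either sign along a line — the cleanest argument is: the solution set $\{x : Ax=0, x\in K\}$ is a cone, and $x_i\in\int K_i$ implies $A_i(\ri K_i\text{-neighborhood of }x_i)\subseteq\Lin(AK)$ because for small $t$, $x\pm t e$ is still a solution when $e$ is chosen so that $Ae$ is controlled — this requires care and is the main obstacle (see below). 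For the reverse inclusion, if $z\in\ri A_i K_i\cap\Lin(AK)$, then $z\in AK$ and $-z\in AK$; writing $z=A_i u_i$ with $u_i\in\ri K_i$ and $-z = Aw$ with $w\in K$, the point $x$ with $x_i = u_i + w_i$ and $x_j = w_j$ for $j\ne i$ satisfies $Ax = z + Aw = 0$, $x\in K$, and $x_i\in\int K_i$ since $u_i\in\ri K_i=\int K_i$ (the cone is regular, hence full-dimensional in $\R^{n_i}$) and adding $w_i\in K_i$ keeps it in the interior. The $N$ characterization is the Farkas-dual mirror: $i\in N$ fails iff the system $A^\transp y\in K^*$, $A_i^\transp y\in\int K_i^*$ is infeasible, which by a separation/Farkas argument for the closed cone $\overline{AK}$ translates to the existence of a nonzero $x_i\in K_i$ with $A_i x_i\in\Lin(\overline{AK})$; here the closure is forced because the dual feasibility of a strict interior condition corresponds to membership in the closed primal image.

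The main obstacle I anticipate is the interplay between $AK$ and its closure, and the precise handling of relative interiors of the block cones versus the image cone. Since $A_iK_i$ need not be closed and $AK$ need not be closed, the statements deliberately use $\ri A_iK_i$ in two places and $\Lin(AK)$ versus $\Lin(\overline{AK})$ in a way that is not symmetric; getting each of the four equalities to land on the correct (un)closed version requires tracking exactly where a limiting argument is unavoidable. Concretely, in the $B$ proof the hard step is showing that $x_i\in\int K_i$ yields $A_ix_i\in\Lin(AK)$ rather than merely $\Lin(\overline{AK})$: this works because one can genuinely perturb $x_i$ inside $\int K_i$ in the direction $-A_i^{+}$(something) and stay an exact solution, so no closure is needed; I would make this rigorous by taking $e\in\R^{n_i}$ arbitrary with $\|e\|$ small, noting $x_i+e\in K_i$, and then correcting feasibility of $Ax=0$ using the other blocks — but this correction is only possible in directions in $A_i\R^{n_i}\cap A(\text{other blocks' span})$, so in fact one shows $A_i x_i$ and $-A_i x_i$ are both in $AK$ directly from $Ax=0$: indeed $-A_i x_i = \sum_{j\ne i}A_j x_j$ and the right side is in $AK$ only if we can realize it with a feasible $x$, which we can by taking the vector equal to $x_j$ in block $j\ne i$ and $0$ in block $i$ — wait, that gives $\sum_{j\ne i}A_jx_j = -A_ix_i$ as an element of $AK$, and $A_ix_i\in AK$ trivially, so $A_ix_i\in\Lin(AK)$ with no closure, cleanly. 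Thus the real subtlety is isolated to the $N$ and $B_0$ arguments where Farkas/separation over a possibly non-closed cone forces the closure, and I would dispatch those by passing to $\overline{AK}$, applying a strong separation theorem, and then carefully arguing the resulting separating functional gives a bona fide $y$ with the required strict membership in $\int K_i^*$, using regularity (pointedness plus solid) of each $K_i$ to upgrade ``$\ge 0$'' conclusions to ``interior'' conclusions.
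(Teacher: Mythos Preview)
Your concrete arguments match the paper's proof almost exactly: the $B$ characterization via $-A_ix_i=\sum_{j\ne i}A_jx_j\in AK$ (and the reverse via $x_i=u_i+w_i$), the $N$ characterization via a strict separation of $\overline{AK}$ from $-A_iK_i$, and the recognition that $N$ and $B_0$ are precisely the two places where closure is forced and a separation/bipolar argument is needed. So the overall strategy is the paper's.

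Two corrections, though. First, drop the ``central technical device'' you state at the outset: the claim $\ri(AK)\subseteq\Lin(\overline{AK})$ is simply false (take $AK=\R^m_+$, where $\ri(AK)=\R^m_{++}$ but $\Lin(\overline{AK})=\{0\}$). Fortunately you never actually use it; your real arguments proceed directly and correctly without it.

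Second, your claim that the $B_0,N_0$ proofs are ``verbatim repetitions'' of the $B,N$ proofs with one substitution is misleading, and your own final paragraph partly retracts it. The pairing is not $B\leftrightarrow B_0$, $N\leftrightarrow N_0$ but rather $B\leftrightarrow N_0$ (both constructive, producing elements of $AK$ and solutions to \eqref{eq:P}) and $N\leftrightarrow B_0$ (both dual, using that the feasible set of \eqref{eq:D} is $(AK)^*$ and invoking the bipolar theorem to land in $\overline{AK}$). In particular, the inclusion $B_0\subseteq\{\,i:\ri A_iK_i\cap\Lin(\overline{AK})\neq\emptyset\,\}$ cannot be obtained by tweaking the $B\subseteq$ argument: from the hypothesis ``$A_i\transp y=0$ for every dual-feasible $y$'' there is no primal solution to manipulate, and one must argue that $\langle A_ix_i,y\rangle=0$ for all $y\in(AK)^*$ forces $\pm A_ix_i\in\overline{AK}$ by biduality. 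The paper does exactly this; make that step explicit rather than deferring it to a ``repetition''.
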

To ease exposition, we defer the proof of Theorem~\ref{thm:main_geometry} to the end of this Section.

\medskip

Observe that in the case when $K$ is a polyhedral cone, we have $AK = \overline{AK}$. Thus for $K$ polyhedral Theorem~\ref{thm:main_geometry} yields $B=B_0$ and $N = N_0$. In particular Proposition~\ref{prop:three.ways} readily follows from Theorem~\ref{thm:main_geometry}.

The next theorem generalizes Proposition~\ref{thm:LP_compl}.  It shows that there is a unique canonical partition of the index set $I$ into six complementarity subsets of indices.

\begin{theorem}\label{thm:main_partition} For a unique partition $B \cup B' \cup N \cup N' \cup C \cup O = I$ of the index set $I$ the following three properties hold:
\begin{description}
  \item[(i)] There exists a solution $\bar x$ to
      \eqref{eq:P}
      such that $$\bar x_i\in \int K \text{ for all } i\in B \text{ and } x_i
      \neq 0 \text{ for all } i\in B'\cup C, $$
  \item[(ii)] There exists a solution $\bar y$ to  \eqref{eq:D}     such that
      $$A_i\transp \bar y\in \int
      K_i^* \text{ for all } i\in N, \text{ and } A_i\transp \bar y\neq 0 \text{ for
      all } i\in N'\cup C.$$
  \item[(iii)] For any solutions $x$ to
      \eqref{eq:P} and  $y$ to
      \eqref{eq:D} we have
      \[x_i =0 \text{ for all } i\in
     N'\cup N\cup O \text{ and } A_i\transp y=0 \text{ for all } i\in
      B\cup B'\cup O.\]
\end{description}

\end{theorem}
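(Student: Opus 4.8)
The plan is to build the partition directly from the four sets $B, N, B_0, N_0$ already introduced, together with the geometric descriptions supplied by Theorem~\ref{thm:main_geometry}. First I would observe the containments $B \subseteq B_0$ and $N \subseteq N_0$: these follow because $\ri A_iK_i \cap \Lin(AK) \neq \emptyset$ forces $\ri A_iK_i \cap \Lin(\overline{AK}) \neq \emptyset$ (since $\Lin(AK) \subseteq \Lin(\overline{AK})$), and symmetrically for $N, N_0$. Next I would show the key disjointness facts: $B$ is disjoint from $N_0$, and $N$ is disjoint from $B_0$. For $i \in B \cap N_0$ one would have a solution $x$ to \eqref{eq:P} with $x_i \in \int K_i$, contradicting $x_i = 0$ from $i \in N_0$; the other case is the dual argument with a $y$-solution. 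More care is needed to see $B_0$ and $N_0$ themselves are disjoint: if $i \in B_0 \cap N_0$ then every $y$ with $A\transp y \in K^*$ has $A_i\transp y = 0$ and every $x$ with $Ax=0, x\in K$ has $x_i=0$; I would derive a contradiction by a separation/Farkas argument showing that these two conditions cannot hold simultaneously for a regular cone $K_i$ of dimension $\geq 1$ — essentially, if the $i$-th block is ``killed'' on both the primal and dual side, one can perturb a feasible solution in the $i$-th coordinate and stay feasible, a contradiction.

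With these relations in hand I would define the six partition blocks by
\begin{equation}\label{eq:partition.def}
B,\quad N,\quad C := I \setminus (B_0 \cup N_0),\quad B' := B_0 \setminus B,\quad N' := N_0 \setminus N,\quad O := (B_0 \cap N_0) \setminus (B \cup N).
\end{equation}
Given the disjointness facts above, $B_0 \cap N_0 \cap B = B_0 \cap N_0 \cap N = \emptyset$, so $O = B_0 \cap N_0$, and one checks these six sets are pairwise disjoint with union $I$. The only subtle point in declaring this a genuine partition is verifying $B_0 \cap N_0$ (hence $O$) is well-defined and that $C$ does not secretly overlap $B$ or $N$ — but $B \subseteq B_0$ and $N \subseteq N_0$ give $B, N \subseteq B_0 \cup N_0$, so $C$ is disjoint from both.

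Then I would prove the three asserted properties. For (iii): each $i \in N' \cup N \cup O$ lies in $N_0$, so by definition of $N_0$ every primal solution $x$ has $x_i = 0$; dually, each $i \in B \cup B' \cup O$ lies in $B_0$, so every dual solution $y$ has $A_i\transp y = 0$. For (i): I need a single primal solution $\bar x$ that is interior in every block of $B$ and nonzero in every block of $B' \cup C$. The standard device is to take, for each such $i$, a solution $x^{(i)}$ witnessing membership (for $i \in B$, one with $x^{(i)}_i \in \int K_i$; for $i \in B' \cup C$, I must first argue such a nonzero-at-$i$ primal solution exists — this is exactly where $B' \cup C$ being contained in $I \setminus N_0$ is used, since $i \notin N_0$ means \emph{some} primal solution has $x_i \neq 0$) — and form a convex combination $\bar x = \sum_i \lambda_i x^{(i)}$ with $\lambda_i > 0$; convexity of the solution set keeps $\bar x$ feasible, the interior-block coordinates stay interior, and nonzero coordinates cannot be cancelled because all $x^{(i)}_i \in K_i$ point into the same cone. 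Property (ii) is the verbatim dual argument using witnesses for $N$ and for $N' \cup C \subseteq I \setminus B_0$. Uniqueness of the partition follows because properties (i)--(iii) force $B, B_0 = B \cup B' \cup O, N, N_0 = N \cup N' \cup O$ to coincide with the sets defined in \eqref{eq:first.conic}--\eqref{eq:second.conic}, which are intrinsic to the data $(A, K)$.

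I expect the main obstacle to be the disjointness claim $B_0 \cap N_0 = O$ being a ``legitimate'' assignment together with the existence claims hidden inside (i) and (ii) — specifically, turning ``$i \notin N_0$'' into an actual primal solution nonzero at block $i$, and checking that a convex combination of finitely many such witnesses simultaneously achieves interiority on $B$ and nonvanishing on $B' \cup C$ without the cones' non-polyhedrality spoiling the argument. The closure subtleties distinguishing $AK$ from $\overline{AK}$ (which is why $B \neq B_0$ in general) are where I would need to be most careful, leaning on Theorem~\ref{thm:main_geometry} rather than re-deriving things from scratch.
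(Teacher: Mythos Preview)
Your overall strategy matches the paper's, but there is a genuine error: the claim that $B_0$ and $N_0$ are disjoint is false, and the separation/Farkas argument you sketch for it cannot succeed. The set $O = B_0 \cap N_0$ can be nonempty --- indeed Example~1 in the paper has $O = \{1\}$. Concretely, an index $i$ for which $A_i(K_i\setminus\{0\})$ lies inside $\Lin(\overline{AK})$ but misses $\Lin(AK)$ belongs to both $B_0$ and $N_0$: every dual solution $y$ has $A_i\transp y = 0$ (since $A_iK_i \subseteq \Lin(\overline{AK})$ forces orthogonality), yet no primal solution has $x_i \neq 0$ (since that would require $-A_ix_i \in AK$, not merely $\overline{AK}$). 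Nothing contradictory happens here; the $i$-th block is simply inactive on both sides, and there is no room to ``perturb a feasible solution in the $i$-th coordinate.''

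This error propagates to your partition definitions. With $B' := B_0 \setminus B$ and $N' := N_0 \setminus N$, any $i \in B_0 \cap N_0$ lies simultaneously in $B'$, $N'$, and $O$, so the six sets fail to be pairwise disjoint. It also breaks your proof of (i): you need $B' \cup C \subseteq I \setminus N_0$ to obtain a primal witness with $x_i \neq 0$, but your $B' = B_0 \setminus B$ may meet $N_0$. The fix is precisely what the paper does: set $B' := B_0 \setminus (B \cup N_0)$, $N' := N_0 \setminus (N \cup B_0)$, and take $O := B_0 \cap N_0$ as a genuine (possibly nonempty) block of the partition. With that correction, the remainder of your argument --- summing witnesses for (i) and (ii), reading (iii) off the definitions of $B_0$ and $N_0$, and deducing uniqueness from the intrinsic nature of $B, N, B_0, N_0$ --- is essentially the paper's proof.
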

\begin{proof}
Take $B,N$ and $B_0,N_0$ as in \eqref{eq:first.conic} and \eqref{eq:second.conic} respectively, and let
\begin{equation}\label{eq:partition} B' := B_0\setminus (B\cup N_0); \;\; N' = N_0\setminus (N\cup B_0); \;\;O = B_0\cap N_0; \;\; C = I\setminus(B_0\cup N_0).\end{equation}
The sets $B,B',C,N,N',O$ comprise a partition of $I$ because  by  Theorem~\ref{thm:main_geometry}
$B \subseteq B_0, \; N \subseteq N_0,$ and also $B \cap N_0 = N\cap B_0 = \emptyset$.

We next prove part (i).  By Theorem~\ref{thm:main_geometry}(ii), for every $i\notin N_0 $ there exists a solution $x^{(i)}$ to \eqref{eq:P} such that $ x^{(i)}_i\in K_i\setminus \{0
\}$. Hence $x_{N_0}=\sum_{i\in I\setminus N_0} x^{(i)}$ is
solution to \eqref{eq:P} and for every $i\notin N_0$ we have $x_i \neq 0$ (since $K_i$ is pointed). By
the definition of $B$, for each $i\in B$ there exists a solution
$\bar x^{(i)}$  to \eqref{eq:P} such that $ x^{(i)}_i\in \int K_i$.
Then $x_{B}=\sum_{i\in B} x^{(i)}$ is solution to \eqref{eq:P} and  $ (x_B)_i\in \int K_i$ (by \cite[Lemma A.2.1.6]{JBHULem2001}). Therefore, again by the pointedness of each $K_i$ and by \cite[Lemma A.2.1.6]{JBHULem2001}, the point
$\bar x=x_B+x_{N_0}$ is a solution to \eqref{eq:P} such that
$\bar x_i\in \int K$ for all $i\in B$ and $x_i
      \neq 0$ for all $i\in B'\cup C.$
An analogous argument proves part (ii). Part (iii) follows directly from the  definition \eqref{eq:second.conic} of $B_0 = B \cup B' \cup O$ and $N_0 = N \cup N' \cup O$.

The uniqueness of the partition is proven as follows. First, observe that if (i) and (ii) hold, then by construction $B,N$ must be as in \eqref{eq:first.conic}.  Likewise if (iii) holds, then $B_0,N_0$ must be as in \eqref{eq:second.conic}.   Therefore if (i), (ii), and (iii) hold, the sets $B,B',C,N,N',O$ must be as in \eqref{eq:partition}.
\end{proof}
The Venn diagram representing the relations between the subsets
of $B$, $B'$, $N$, $N'$, $C$ and $O$ of $I$ is given in
Fig.~\ref{fig:diagram}.
\begin{figure}[h]
\centering
\includegraphics[keepaspectratio, height=120pt ]{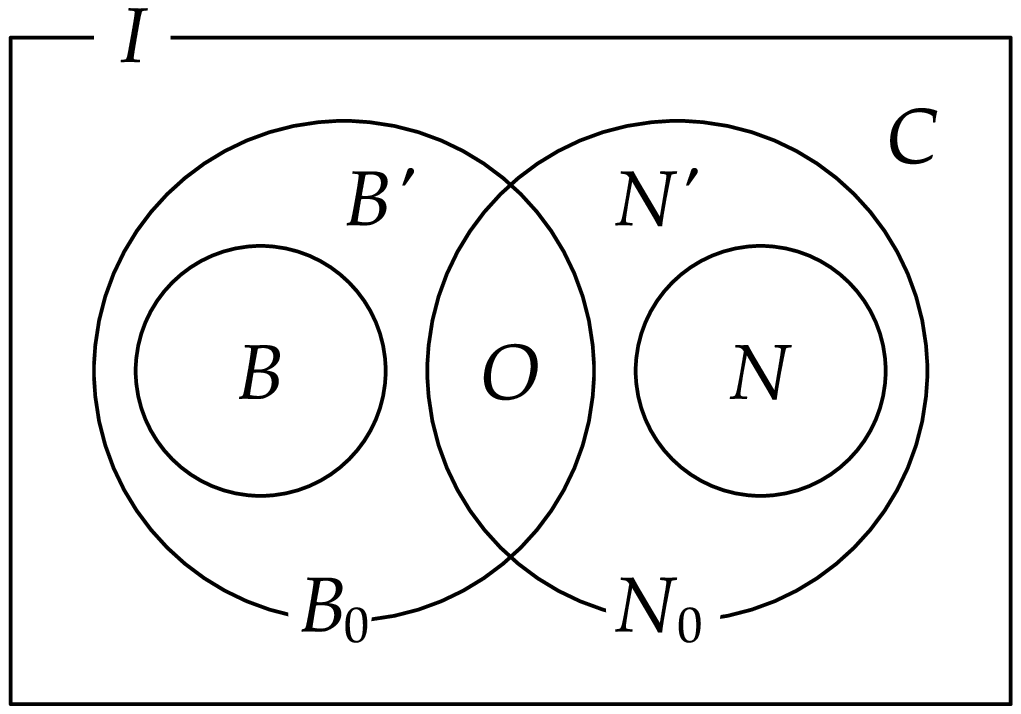}
\caption{Partition of $I$ into six disjoint sets based on $B$, $N$, $B_0$ and $N_0$}
\label{fig:diagram}
\end{figure}
In Section~\ref{sec:geom} we provide an example of a
second-order conic programming problem for which all six sets
are nonempty.  It should be noted that a six-set partition for second-order conic programs similar to the one suggested here was mentioned in
\cite[Section~6]{AlizGold03}.  However, there was no prior characterization of this partition along the lines of Theorem~\ref{thm:main_geometry}.

\medskip

We conclude this section with the proof of Theorem~\ref{thm:main_geometry}.  Our proof relies on the following separation lemma.  Although this result is likely known, we were not able to locate it in the literature in this exact form.

\begin{lemma}\label{lem:sep_cones} Let $K_1,K_2\subseteq \R^n$ be
closed convex cones such that $K_1\cap K_2 = \{0\}$ and
$\Lin(K_2) = \{0\}$.  Then $K_1$ and $K_2$ can be strictly
separated in the following sense. There exists $s\in \R^n$ such that
\begin{equation}\label{eq:005}
\langle s, y\rangle\leq  0\quad  \forall y\in K_1,\qquad \langle s, y\rangle>0 \quad \forall y\in K_2\setminus \{0\}.
\end{equation}
\end{lemma}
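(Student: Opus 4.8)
The plan is to prove this by a two-step separation argument, first producing a hyperplane that separates $K_1$ from $K_2$ in the weak sense, and then perturbing it to get the strict separation on $K_2 \setminus \{0\}$. First I would note that since $K_1$ and $K_2$ are closed convex cones with $K_1 \cap K_2 = \{0\}$, one cannot in general strictly separate them by a single hyperplane (both contain the origin), so the conclusion we want is exactly the asymmetric statement in \eqref{eq:005}: weak inequality on all of $K_1$, strict on $K_2 \setminus \{0\}$. The role of the hypothesis $\Lin(K_2) = \{0\}$ (i.e. $K_2$ is pointed) is precisely what makes the strict side possible.

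The key technical device I would use is to intersect $K_2$ with a sphere. Concretely, set $S = K_2 \cap \{y : \|y\| = 1\}$, which is compact (closed and bounded) and nonempty unless $K_2 = \{0\}$ (a trivial case one disposes of immediately by taking any $s$ with $\langle s, y\rangle \le 0$ on $K_1$, e.g. $s$ in the polar of $K_1$; in fact $s=0$ works, or better pick $s$ later). Now consider the closed convex cone $K_1$ and the compact convex set $\mathrm{co}(S)$; I claim $0 \notin \mathrm{co}(S)$ because $K_2$ is pointed (a convex combination of unit vectors in a pointed cone cannot be $0$), and moreover $K_1 \cap \mathrm{co}(S) = \emptyset$: any point of $\mathrm{co}(S)$ lies in $K_2$ and is nonzero, so it cannot lie in $K_1$ since $K_1 \cap K_2 = \{0\}$. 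Then by the standard strict separation theorem for a closed convex set and a disjoint compact convex set, there exist $s \in \R^n$ and $\alpha < \beta$ with $\langle s, x\rangle \le \alpha$ for all $x \in K_1$ and $\langle s, z\rangle \ge \beta$ for all $z \in \mathrm{co}(S)$. Since $K_1$ is a cone, $\langle s, x\rangle \le \alpha$ for all $x \in K_1$ forces $\alpha \ge 0$ and in fact $\langle s, x\rangle \le 0$ for all $x\in K_1$ (if some $\langle s, x_0\rangle > 0$ then scaling $x_0$ violates the bound); hence we may take $\alpha = 0 < \beta$. Then for any $y \in K_2 \setminus \{0\}$, the unit vector $y/\|y\| \in S$ gives $\langle s, y/\|y\|\rangle \ge \beta > 0$, so $\langle s, y \rangle \ge \beta \|y\| > 0$. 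This is exactly \eqref{eq:005}.

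The main obstacle — and the only place real care is needed — is justifying that $\mathrm{co}(S)$ is a legitimate object for the strict separation theorem: it must be convex (immediate by definition) and, crucially, \emph{closed}. Closedness of $\mathrm{co}(S)$ follows from compactness of $S$: the convex hull of a compact set in $\R^n$ is compact (by Carathéodory's theorem, $\mathrm{co}(S)$ is the continuous image of the compact set $S^{n+1} \times \Delta_n$, where $\Delta_n$ is the standard simplex). The other subtlety is confirming $0 \notin \mathrm{co}(S)$; this is where pointedness enters decisively. If $0 = \sum_j \lambda_j y_j$ with $\lambda_j > 0$, $\sum_j \lambda_j = 1$, $y_j \in S \subseteq K_2$, then $-\lambda_1 y_1 = \sum_{j\ge 2}\lambda_j y_j \in K_2$, so $y_1 \in -K_2$, giving $y_1 \in K_2 \cap (-K_2) = \Lin(K_2) = \{0\}$, contradicting $\|y_1\| = 1$. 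With these two points secured, the rest is the routine cone-scaling argument sketched above. I would also handle the degenerate case $K_2 = \{0\}$ separately at the start: then $K_2 \setminus \{0\} = \emptyset$, the strict condition is vacuous, and any $s$ in the polar cone $K_1^*$'s negative — concretely $s = 0$ — satisfies \eqref{eq:005} trivially, though to be safe one can instead observe $K_1 \ne \R^n$ is not required and simply take $s=0$.
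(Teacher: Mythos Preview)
Your proof is correct and follows essentially the same route as the paper's: intersect $K_2$ with the unit sphere to obtain a compact set $S$, use pointedness to ensure $0\notin\co S$ (hence $K_1\cap\co S=\emptyset$), apply the strict separation theorem between the closed convex cone $K_1$ and the compact convex set $\co S$, and finish with the cone-scaling argument. Your write-up is in fact more careful than the paper's, which simply asserts compactness of $\co S$ and $0\notin\co S$; note that your opening description of a ``two-step separation then perturbation'' is not what you actually carry out---you get strict separation directly, as does the paper.
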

\begin{proof} Let $C:=\{x\in K_2\,|\, \|x\|=1\}$. Since $K_2$ is closed and $\Lin K_2 = \{0\}$, the set  $\co C$ is compact and $0\notin \co C.$ In particular $K_1 \cap \co C = \emptyset$.
Hence, by~\cite[Corol. A.4.1.3]{JBHULem2001}, there exists a point $s\in \R^n$ such that
\begin{equation}\label{eq:007}
\sup_{y\in K_1}\langle s, y\rangle <\min_{y\in {\co C}}\langle s, y\rangle.
\end{equation}
Since $0\in K_1$ we have
$
\sup_{y\in K_1}\langle s, y\rangle \geq \langle s, 0\rangle =0.
$
Thus from \eqref{eq:007} and the fact that $K_1$ is a cone it follows that
\[
\sup_{y\in K_1}\langle s, y\rangle = 0 < \min_{y\in {C}}\langle s, y\rangle,
\]
and \eqref{eq:005} readily follows.
\end{proof}

\noindent
{\em Proof of Theorem~\ref{thm:main_geometry}.}
\begin{description}
\item[(i)] We first show $B \supseteq \{i\in I\,|\,  \ri A_i K_i\cap \Lin(AK) \neq \emptyset \}$.  Assume $i\in I$ is such that $\ri (A_i K_i)\cap \Lin(AK) \neq \emptyset$. By~\cite[Prop. A.2.1.12]{JBHULem2001}, $\ri (A_i K_i) = A_i(\ri
K_i)=A_i(\int K_i)$. Hence there exists $\bar x_i\in \int K_i$
such that $A_ix_i\in \Lin(AK)$.  Thus $-A_ix_i = Ax'$ for some $x'\in K$. Let $x\in K$ be defined by putting $x_j= x'_j$ for $j\neq i$ and $x_i = x'_i+\bar x_i$. By~\cite[Lemma A.2.1.6]{JBHULem2001}, it follows that $x$ is a solution to \eqref{eq:P} and $x_i \in \int K_i$.  Thus $i\in B$.

Next, we show $B \subseteq \{i\in I\,|\,  \ri A_i K_i\cap \Lin(AK) \neq \emptyset \}$.
Assume $i\in B$. Hence there exists $x\in K$ such that $x_i \in \int
K_i=\ri K_i$ and $Ax = 0$. By \cite[Prop. A.2.1.12]{JBHULem2001},
\begin{equation}\label{eq:001}
A_i x_i \in \ri A_i K_i.
\end{equation}
Let $x'\in\R^n$ be defined by putting $x'_j = 0 $ for $j\neq i$ and $x'_i = x_i$.
We have $\bar x = x-x'\in K$ and so $-A_i x_i = A\bar x \in AK$.
But $A_ix_i\in A_i K_i \subset AK$ as well, therefore
\begin{equation}\label{eq:002}
A_ix_i \in \Lin(AK).
\end{equation}
From \eqref{eq:001} and \eqref{eq:002} we have $\ri A_i K_i\cap
\Lin(AK) \neq \emptyset$.

\medskip

Now we show $N \supseteq \{i\in I\,|\,  A_i (K_i\setminus\{ 0\}) \cap \Lin(\overline{AK}) = \emptyset\}.$  Assume $i\in I$ is such that $A_i (K_i\setminus\{ 0\}) \cap \Lin(\overline{AK}) = \emptyset$.
Since $A_i K_i\subseteq \overline{AK}$, this yields
$$
\Lin(A_iK_i) = \{0\} \; \text{ and } \;- A_i (K_i\setminus\{0\}) \cap \overline{AK} = \emptyset.
$$
Therefore by Lemma~\ref{lem:sep_cones} applied to $K_1 = \overline{AK}$ and $K_2 = -A_iK_i$,
 there exists a nonzero $y\in \R^m$ such that
$
y\transp Ax \geq 0 \quad \forall x\in K
$
and $
y\transp (-A_ix_i) < 0  \quad \forall x_i \in K_i\setminus\{0\}.
$ In particular, $y$ is a solution to \eqref{eq:D} and $A_i\transp y \in \int K_i^*.$

Next we show $N \subseteq \{i\in I\,|\,  A_i (K_i\setminus\{ 0\}) \cap \Lin(\overline{AK}) = \emptyset\}.$  To that end, we show the contrapositive.  Assume $i\in I$ is such that $A_i (K_i\setminus\{ 0\}) \cap \Lin(\overline{AK}) \ne \emptyset$.
Then there exists $x_i\in K_i\setminus \{0\}$ such that $A_i x_i, -A_i x_i
\in \overline{AK}$. Hence for any solution $y$ to \eqref{eq:D} we have
$
y\transp A_i x_i \geq 0$ and $\quad y\transp (-A_i x_i)\geq 0
$
so $y\transp A_i x_i  = 0$.  Since $x_i\in K_i\setminus \{0\}$, this implies that $A_i\transp y \not \in \int K_i^*$.  Consequently $i\not \in N$.

\item[(ii)] We first show $B_0 \supseteq \{i\in I:\ri A_i K_i\cap \Lin(\overline{AK})  \neq \emptyset  \}$. Assume that $\ri A_i K_i\cap \Lin(\overline{AK})  \neq
\emptyset $. Then by \cite[Prop. A.2.1.12]{JBHULem2001} there exists
$x_i\in \int K_i$ such that  $A_i x_i,-A_ix_i\in \overline{AK}$.
Therefore, as in the previous paragraph, it follows that
$y\transp A_i x_i = 0$
for any solution $y$ to \eqref{eq:D}. Since $x_i\in \int K_i$, this implies that
$A_i\transp y = 0$ for any solution $y$ to \eqref{eq:D}. Thus $i \in B_0.$

We next show $B_0 \subseteq \{i\in I:\ri A_i K_i\cap \Lin(\overline{AK})  \neq \emptyset  \}$.
Assume $i\in B_0$.  Then for all solutions $y$ to~\eqref{eq:D} and all $x_i\in K_i$ we have $(A_ix_i)\transp
y=0$. Thus  $A_ix_i, -A_ix_i \in \overline{AK}$ for all $x_i\in K_i$, and hence $A_iK_i\subset \Lin(\overline{AK})$.

\medskip

We now show  $N_0 \supseteq \{i\in I: A_i (K_i\setminus \{0\}) \cap \Lin(AK) = \emptyset \}.$  To that end, we show the contrapositive.  Assume  $i\in I$ is such that
there exists a solution $x$ to \eqref{eq:P} with $x_i \neq 0$.  Since $-A_ix_i=\sum_{j\neq i}A_jx_j \in AK$, we have $A_ix_i,-A_ix_i\in AK$ with $x_i\neq 0$. Hence $A_ix_i \in A_i (K_i\setminus \{0\}) \cap \Lin(AK).$

We finally show $N_0 \subseteq \{i\in I: A_i (K_i\setminus \{0\}) \cap \Lin(AK) = \emptyset \}.$
Again we show the contrapositive.  Assume $i\in I$ is such that $A_i (K_i\setminus \{0\}) \cap
\Lin(AK)  \neq \emptyset$. Then there exists $x_i\in K_i\setminus\{0\}$ such
that $A_ix_i,-A_ix_i\in AK$. In particular, for some
$x'\in K$ we have $-A_ix_i = Ax'.$ Then the point $\bar x\in K$ defined by putting
$\bar x_j = x'_j$ for $j\neq i$ and $\bar x_i = x'_i+x_i$ is a solution to \eqref{eq:P} with $\bar x_i \neq 0$ (because $K_i$ is pointed).

\end{description}
\qed

\section{Second-Order Conic Systems}
\label{sec:second_order}
Consider the special case when the cone $K$ in \eqref{eq:P},\eqref{eq:D} is a cartesian product of Lorentz cones.  In other words,
\begin{equation}\label{eq:second.order.cone}
K = \L_{n_1-1}\times \cdots \times \L_{n_r-1},
\end{equation}
where
$$
\L_{n_i-1} = \{(x_0,\bar x)\in \R^{n_i}\, |\, x_0\geq \|\bar x\|\}, \; i=1,\dots,r.
$$
Here  $\|\cdot\|$ is the Euclidean norm in $\R^{n_i}.$  We shall put, by convention, $\L_0 = \R_+$ when $n_i=1$.  Also, for $d \ge 1$ we will let $\B_{d} \subseteq \R^d$ denote the Euclidean closed unit ball in $\R^d$ centered at zero.

For each $i\in I$ assume the $i$-th block  $A_i\in \R^{m\times n_i}$ of $A$ is of the form
\[
A = \matr{A_{i0} & \bar A_i}, \; A_{i0} \in \R^m, \; \bar A_i \in \R^{m\times (n_i-1)}.
\]
In other words, $A_{i0} $ denotes the first
column of $A_i$, and $\bar A_i$ denotes the block of
remaining $n_i-1$ columns.   Put
\begin{equation}\label{eq:Ellipsoids}
E_i =
  \left\{
    \begin{array}{lll}
      A_{i0}+\bar A_i \B_{n_i-1},& \text{ if} & n_i>1,\\
      A_{i0}    ,& \text{ if} & n_i = 1.
    \end{array}
    \right.
\end{equation}
Observe that $AK = \cone \co_{i\in I}\{E_i\}.$
Theorem~\ref{thm:main_geometry} can now be stated in a way that more closely resembles \eqref{eq:third} in Proposition~\ref{prop:three.ways}.

\begin{proposition}\label{cor:main_socp} Consider the pair of multifold conic systems \eqref{eq:P}, \eqref{eq:D}.  Assume $K$ is as in \eqref{eq:second.order.cone} and $E_i,\;i\in I$ are as in \eqref{eq:Ellipsoids}.  Then
\begin{description}
\item[(i)]
The sets $B$, $N$ defined in \eqref{eq:first.conic} satisfy
\[
\begin{array}{c}
B = \{i\in I\,|\,  \ri E_i\cap \Lin(AK) \neq \emptyset \},\\[2ex]
N = \{i\in I\,|\,  E_i \cap \Lin(\overline{AK}) = \emptyset\}.
\end{array}
\]
\item[(ii)]
The sets $B_0$, $N_0$ defined in \eqref{eq:second.conic} satisfy
\[
\begin{array}{c}
B_0 = \{i\in I\,|\,  \ri E_i\cap \Lin(\overline{AK}) \neq \emptyset \},\\[2ex]
N_0 = \{i\in I\,|\,  E_i \cap \Lin(AK) = \emptyset\}.
\end{array}
\]
\end{description}
\end{proposition}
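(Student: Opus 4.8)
The plan is to read Proposition~\ref{cor:main_socp} off Theorem~\ref{thm:main_geometry} once a dictionary between the cone $A_iK_i$ and the ellipsoid $E_i$ is in place. Concretely, I would first establish that for every $i\in I$
\[
A_iK_i=\cone E_i,\qquad A_i\bigl(K_i\setminus\{0\}\bigr)=\{t e:\ t>0,\ e\in E_i\},\qquad \ri A_iK_i=\{t e:\ t>0,\ e\in\ri E_i\}.
\]
For $n_i>1$ this is elementary: every nonzero $(x_0,\bar x)\in\L_{n_i-1}$ has $x_0>0$ and equals $x_0\,(1,\bar x/x_0)$ with $\bar x/x_0\in\B_{n_i-1}$, so, using $E_i=A_i(\{1\}\times\B_{n_i-1})$, one gets $A_i(K_i\setminus\{0\})=\{t e:t>0,\ e\in E_i\}$ and then $A_iK_i=\cone E_i$ by adjoining $0$. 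For the relative interior, $\ri A_iK_i=A_i(\ri K_i)=A_i(\int\L_{n_i-1})$ by~\cite[Prop. A.2.1.12]{JBHULem2001}, and $\int\L_{n_i-1}=\{t\,(1,u):t>0,\ \|u\|<1\}$, while the same proposition applied to the affine map $u\mapsto A_{i0}+\bar A_iu$ gives $\ri E_i=\{A_{i0}+\bar A_iu:\|u\|<1\}$; combining these yields the claimed identity. The degenerate case $n_i=1$, where $K_i=\R_+$, $E_i=\{A_{i0}\}$ and $\ri E_i=E_i$, is immediate.

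Next I would record the trivial but crucial observation that a linear subspace $L\subseteq\R^m$ meets a set $S\subseteq\R^m$ if and only if it meets the positively scaled set $\{t s:t>0,\ s\in S\}$: the forward implication holds because $S$ is contained in that set, and conversely $t s\in L$ with $t>0$ forces $s=t^{-1}(t s)\in L$ since $L$ is a subspace.

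Putting the two together gives, for each $i$ and with $L$ equal to either $\Lin(AK)$ or $\Lin(\overline{AK})$, the equivalences $\ri A_iK_i\cap L\neq\emptyset\iff\ri E_i\cap L\neq\emptyset$ and $A_i(K_i\setminus\{0\})\cap L=\emptyset\iff E_i\cap L=\emptyset$. Substituting these into the descriptions \eqref{eq:third.conic} of $B,N$ and \eqref{eq:third.conic.0} of $B_0,N_0$ furnished by Theorem~\ref{thm:main_geometry} produces exactly the four displayed formulas, which completes the proof.

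I expect the only mildly delicate step to be the relative-interior identity $\ri A_iK_i=\{t e:t>0,\ e\in\ri E_i\}$: one must check that $\ri$ commutes both with the linear map $A_i$ and with the affine parametrization of the ellipsoid, and that the scaling structure of the Lorentz cone survives the image. All of this is covered by~\cite[Prop. A.2.1.12]{JBHULem2001}, which is already invoked elsewhere in the paper. Note also that the identity $A_iK_i=\cone E_i$ recovers the relation $AK=\cone\co_{i\in I}\{E_i\}$ asserted just before the statement, so no new facts about $AK$ are required.
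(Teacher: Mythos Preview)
Your proposal is correct and follows exactly the approach the paper intends: the paper's own proof is the single sentence ``This readily follows from Theorem~\ref{thm:main_geometry} and the construction of the sets $E_i,\ i\in I$,'' and you have simply fleshed out the dictionary between $A_iK_i$ and $\cone E_i$ that makes that reading-off possible. The identities you record and the scaling observation about linear subspaces are precisely what is hidden behind the word ``readily.''
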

\begin{proof} This readily follows from Theorem~\ref{thm:main_geometry} and the construction of the sets $E_i,\; i\in I$.
\end{proof}

We now discuss an example of a second-order feasibility system where
all six sets $B,N,B',N',C,O$ in the partition of Theorem~\ref{thm:main_partition} are nonempty.

\begin{example}
{\em
Let $K = \R_+\times \R_+\times \R_+\times {\cal L}_1\times {\cal L}_1\times {\cal L}_3\subseteq
\R^{11}$ and
$$
A = \left[
  \begin{array}{rrrrrrrrrrrrrrrr}
   1  & &  0 & &  0 & &  1 & 0 & &   1 & 1 & &  0 & 1 & 0 & 0 \\
   0  & &  1 & &  0 & &  0 & 0 & &  -1 & 0 & &  0 & 0 & 1 & 0 \\
   0  & &  0 & &  1 & &  1 & 1 & &   0 & 0 & &  1 & 0 & 0 & 1
  \end{array}
\right].
$$
In this case,
$$
E_1 = \{(1,0,0)\},\;E_2 = \{(0,1,0)\},\;E_3 = \{(0,0,1)\},\; E_4 = \co\{(1,0,0),(1,0,2)\},
$$
$$
 E_5 = \co\{(0,-1,0),(2,-1,0)\},\; E_6 = \{(0,0,1)\}+\B_3.
$$
Thus
$$
AK = \{(x,y,z)\in \R^3\, |\, z>0\}\cup\{(x,y,z)\in \R^3\, |\, z=0, x\geq 0\},$$
$$\overline{AK} = \{(x,y,z)\in \R^3\, |\, z\geq 0\},
$$
and
$$
\Lin(AK) = \{0\}\times \R\times \{0\}; \quad \Lin(\overline{AK}) = \R\times\R\times \{0\}.
$$
Figure~\ref{fig:Example2} shows the sets $\Lin(AK), \Lin(\overline{AK}),E_1,\dots, E_6$.
\begin{figure}[ht]
\centering
\includegraphics[keepaspectratio, height=180pt ]{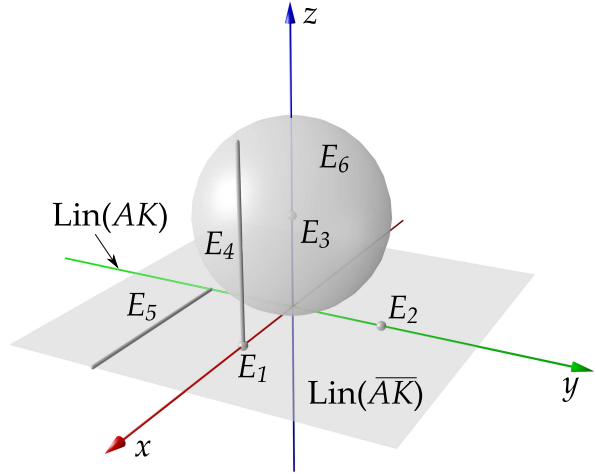}
\caption{Geometric interpretation of the partition in Example 1}
\label{fig:Example2}
\end{figure}

\noindent
From Proposition~\ref{cor:main_socp} we readily get
$$
B = \{2\},\; N = \{3\}, \; B_0 = \{1,2,5\}, \; N_0 = \{1,3,4\}.
$$
Hence in this case the partition sets of Theorem~\ref{thm:main_partition} are $$
O = \{1\},\; B = \{2\},\; N=\{3\},\; N'=\{4\},\; B'=\{5\},\; C = \{6\}.
$$

We note that in this small example the systems $Ax=0$, $x\in K$ and
$A\transp y \in K$ can be solved directly.  We obtain the following parametric families of solutions to \eqref{eq:P} and \eqref{eq:D} respectively:
$$
x = (0, \lambda, 0, 0 , 0, \lambda, -\lambda, \mu, 0, 0, -\mu), \quad \lambda\geq 0, \mu\geq 0;
$$
and
$$
y = \left(0,0,\gamma\right),
\quad \gamma\geq 0.
$$
The correctness of the partition $
O = \{1\},\; B = \{2\},\; N=\{3\},\; N'=\{4\},\; B'=\{5\},\; C = \{6\}
$
can then be directly verified.
}
\end{example}

\section{Some Final Remarks}
\label{sec:final}
\subsection{Geometric interpretation of
Theorem~\ref{thm:main_geometry}}\label{sec:geom}

Proposition~\ref{cor:main_socp} can be stated in a form that holds more generally. Consider the general multifold systems \eqref{eq:P}, \eqref{eq:D}.  Assume $K$ is as in \eqref{eq:multifold.cone} where each $K_i\subseteq \R^{n_i}, \; i\in I$ is regular.  Furthermore, assume $B_i$ be a compact convex subset of $K_i$ such that
$0\notin B_i$ and $K_i = \cone B_i$ for $i\in I$.  Put
\begin{equation}\label{eq:gral.ellipsoid}
E_i = A_iB_i, \; i\in I.
\end{equation}
Observe that $AK = \cone \co_{i\in I}\{E_i\}$.
Theorem~\ref{thm:main_geometry} can now be stated as follows.

\begin{theorem}\label{thm:partition_detail}  Consider the pair of multifold conic systems \eqref{eq:P}, \eqref{eq:D}.  Assume $K$ is as in \eqref{eq:multifold.cone} and $E_i,\;i\in I$ are as in \eqref{eq:gral.ellipsoid}. Then
\begin{description}
\item[(i)]
The sets $B$, $N$ defined in \eqref{eq:first.conic} satisfy
\[
\begin{array}{c}
B = \{i\in I\,|\,  \ri E_i\cap \Lin(AK) \neq \emptyset \},\\[2ex]
N = \{i\in I\,|\,  E_i \cap \Lin(\overline{AK}) = \emptyset\}.
\end{array}
\]
\item[(ii)]
The sets $B_0$, $N_0$ defined in \eqref{eq:second.conic} satisfy
\[
\begin{array}{c}
B_0 = \{i\in I\,|\,  \ri E_i\cap \Lin(\overline{AK}) \neq \emptyset \},\\[2ex]
N_0 = \{i\in I\,|\,  E_i \cap \Lin(AK) = \emptyset\}.
\end{array}
\]
\end{description}
\end{theorem}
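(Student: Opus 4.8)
The plan is to obtain Theorem~\ref{thm:partition_detail} as an immediate consequence of Theorem~\ref{thm:main_geometry}: the two theorems involve the very same lineality spaces $\Lin(AK)$ and $\Lin(\overline{AK})$, so the only thing to do is to replace the sets $A_iK_i$ appearing in \eqref{eq:third.conic} and \eqref{eq:third.conic.0} by the compact convex sets $E_i$. First I would record two identities. Since $A_i$ is linear and $K_i=\cone B_i$, we have $A_iK_i=A_i(\cone B_i)=\cone(A_iB_i)=\cone E_i$; and since $B_i$ is convex with $0\notin B_i$, every nonzero point of $K_i=\cone B_i$ has the form $tb$ with $t>0$ and $b\in B_i$, whence $A_i(K_i\setminus\{0\})=\bigcup_{t>0}tE_i$. (As already noted, $AK=\cone\co_{i\in I}\{E_i\}$, so the $E_i$ also determine $AK$ and $\overline{AK}$; but this is not needed below.)

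Next I would use the one elementary feature of lineality spaces that drives everything: a linear subspace $L$ is invariant under multiplication by positive scalars. Hence for any set $S\subseteq\R^m$ one has $\bigl(\bigcup_{t>0}tS\bigr)\cap L\ne\emptyset$ if and only if $S\cap L\ne\emptyset$, because $te\in L$ with $t>0$ forces $e=t^{-1}(te)\in L$. Taking $S=E_i$ and $L=\Lin(AK)$ or $L=\Lin(\overline{AK})$ and combining with $A_i(K_i\setminus\{0\})=\bigcup_{t>0}tE_i$, the conditions ``$A_i(K_i\setminus\{0\})\cap L=\emptyset$'' of Theorem~\ref{thm:main_geometry} become exactly ``$E_i\cap L=\emptyset$''. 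This already yields the stated descriptions of $N$ and $N_0$.

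For $B$ and $B_0$ I would prove the analogous equivalence with relative interiors, $\ri(A_iK_i)\cap L\ne\emptyset\iff\ri E_i\cap L\ne\emptyset$. The ingredient is the classical identity $\ri(\cone C)=\bigcup_{t>0}t\,\ri C$ valid for a nonempty compact convex set $C$. Applied to $C=B_i$ it gives $\ri K_i=\bigcup_{t>0}t\,\ri B_i$; then, using that relative interior commutes with linear maps (\cite[Prop.~A.2.1.12]{JBHULem2001}) in the two forms $\ri(A_iK_i)=A_i(\ri K_i)$ and $\ri E_i=\ri(A_iB_i)=A_i(\ri B_i)$, I obtain $\ri(A_iK_i)=A_i\bigl(\bigcup_{t>0}t\,\ri B_i\bigr)=\bigcup_{t>0}t\,\ri E_i$. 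Intersecting with the scale-invariant subspace $L$ as above gives $\ri(A_iK_i)\cap L\ne\emptyset\iff\ri E_i\cap L\ne\emptyset$. Substituting all four equivalences into Theorem~\ref{thm:main_geometry} produces the four descriptions of $B,N,B_0,N_0$ claimed in Theorem~\ref{thm:partition_detail}.

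The only point that is not mechanical is the conic-hull relative-interior identity $\ri(\cone C)=\bigcup_{t>0}t\,\ri C$, and I expect it to be the sole genuine obstacle; compactness of $B_i$ is exactly what makes it valid. It is standard, but if a self-contained argument is wanted it is short: the inclusion $\supseteq$ follows from the line-segment principle for relative interiors, while for $\subseteq$ one writes a nonzero $x\in\ri(\cone C)$ as $x=tb$ with $t>0$, $b\in C$, and checks that if $b$ lay on the relative boundary of $C$ then, via a supporting functional of $C$ at $b$, the ray $\R_+b$ would lie on the relative boundary of $\cone C$, contradicting $x\in\ri(\cone C)$. With this lemma in hand, the proof of Theorem~\ref{thm:partition_detail} is as short as that of Proposition~\ref{cor:main_socp}.
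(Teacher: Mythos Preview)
Your proposal is correct and follows exactly the approach the paper intends: the paper offers no separate proof for Theorem~\ref{thm:partition_detail}, treating it as a direct restatement of Theorem~\ref{thm:main_geometry} via the construction $E_i=A_iB_i$ (just as the one-line proof of Proposition~\ref{cor:main_socp} does in the second-order case). Your argument simply makes explicit the two scale-invariance reductions---$A_i(K_i\setminus\{0\})=\bigcup_{t>0}tE_i$ and $\ri(A_iK_i)=\bigcup_{t>0}t\,\ri E_i$---that justify passing from the conic conditions of Theorem~\ref{thm:main_geometry} to the compact-base conditions here; this is precisely the content hidden in the paper's ``readily follows.''
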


\begin{remark}\label{rem:equiv_cond} {\em
The alternate descriptions for the sets $B,B_0$ in Theorem~\ref{thm:partition_detail} can also be stated as follows.
$$
\begin{array}{rlcl}
& \ri E_i\cap \Lin(AK) \neq \emptyset & \Leftrightarrow & \ri E_i\subseteq \Lin(AK),\\[1ex]
 & \ri E_i \cap \Lin(\overline{AK})  \neq \emptyset
& \Leftrightarrow & \ri E_i \subseteq \Lin(\overline{AK}) .
\end{array}
$$}
\end{remark}

\subsection{Some observations on polyhedral systems}

While for the polyhedral feasibility problem strict complementarity always holds (Proposition~\ref{thm:LP_compl}), one might ask: what happens if each lower-dimensional cone in a
multifold system is itself a product of nonnegative orthants?
Since a linear image of a polyhedral set is closed, from Theorem~\ref{thm:main_partition} it follows that $B=B_0$ and $N=N_0$. Hence, we have only three possible complementarity sets: $B$, $N$ and $C = I\setminus(B\cup N)$.
Any problem with both $B$ and $N$ nonempty could alternatively be considered as a multifold problem with a single cone.  In this case its only index would be in $C$. Therefore, there are polyhedral systems with nonempty $C$.  However, for any polyhedral system the partition sets  $B'$, $N'$ and  $O$ are always empty.

\bibliographystyle{plain}	
\bibliography{references}

\end{document}